\documentclass{article}

\usepackage[utf8]{inputenc}
\usepackage[T1]{fontenc}
\usepackage[margin=1.25in]{geometry}
\usepackage{amsmath, amssymb, amsthm, amsfonts}
\usepackage{mathtools}
\usepackage{bbm}
\usepackage{xcolor}
\usepackage{hyperref}
\usepackage[shortlabels]{enumitem}

\newtheorem{theorem}{Theorem}
\newtheorem{proposition}[theorem]{Proposition}
\newtheorem{lemma}[theorem]{Lemma}

\newtheorem{definition}[theorem]{Definition}

\newcommand{\reals}{\mathbb{R}}
\newcommand{\nats}{\mathbb{N}}
\newcommand{\ex}{\mathbb{E}}
\newcommand{\pr}{\mathbb{P}}
\newcommand{\var}{\operatorname{Var}}
\newcommand{\norm}[1]{\|#1\|}
\newcommand{\infnorm}[1]{\|#1\|_\infty}

\newcommand{\Lip}{\operatorname{Lip}}
\newcommand{\ind}{\mathbbm{1}}
\newcommand{\Lset}{\mathcal{L}}

\title{Wasserstein Concentration of Empirical Measures for Dependent Data via the Method of Moments}

\author{
  Arash A. Amini \\
  \texttt{aaamini@g.ucla.edu}
  \and
  Luciano Vinas \\
  \texttt{lucianovinas@g.ucla.edu}
}

\date{\today}

\begin{document}

\maketitle

\begin{abstract}
We establish a general concentration result for the 1-Wasserstein distance between the empirical measure of a sequence of random variables and its expectation. Unlike standard results that rely on independence (e.g., Sanov's theorem) or specific mixing conditions, our result requires only two conditions: (1) control over the variance of the empirical moments, and (2) a flexible tail condition we term $\Psi_{r_n}$-sub-Gaussianity. This approach allows for significant dependencies between variables, provided their algebraic moments behave predictably. The proof uses the method of moments combined with a polynomial approximation of Lipschitz functions via Jackson kernels, allowing us to translate moment concentration into topological concentration.
\end{abstract}

\section{Introduction}

The convergence of the empirical measure $\mu_n = \frac{1}{n}\sum_{i=1}^n \delta_{Y_i}$ to a deterministic limit $\bar{\mu}$ is a fundamental problem in probability and statistics. When the variables $Y_i$ are independent and identically distributed (i.i.d.), classical results such as Sanov's theorem or the Varadarajan theorem provide strong guarantees. However, in many modern high-dimensional settings---such as the spectral distribution of random matrices, interacting particle systems, or feature propagation in graph neural networks---the variables $Y_i$ exhibit complex dependency structures.

In such dependent settings, establishing independence or mixing properties can be intractable. However, it is often possible to compute the \emph{moments} of the system, $\ex[n^{-1}\sum Y_i^k]$, via combinatorial techniques (e.g., trace methods or walk counting). The classical \emph{Method of Moments}\cite[Section 2.5]{Vaart98} uses these algebraic quantities to prove weak convergence.

In this note, we provide a ``glue'' theorem that upgrades algebraic moment control to concentration in the 1-Wasserstein metric ($W_1$). Specifically, we show that if the variance of the empirical moments vanishes asymptotically, and the tails of the distribution decay sufficiently fast, then the empirical measure concentrates in $W_1$.

A key technical contribution is the handling of the test functions. While moment convergence naturally controls polynomials, the $W_1$ metric is defined via Lipschitz functions. To bridge this gap, 
we employ a polynomial approximation of Lipschitz functions using Chebyshev-Jackson kernels. This allows us to control the size of the coefficients of the polynomial, while achieving the optimal rate of approximation for Lipschitz functions, both of which are necessary for the proof.

\section{Preliminaries}

Let $\mu_n = \frac{1}{n} \sum_{i=1}^n \delta_{Y_{i,n}}$ be the empirical measure of a triangular array of real-valued random variables $\{Y_{i,n}\}_{i=1}^n$. We denote the expected measure as $\bar{\mu}_n = \ex [\mu_n]$.
The 1-Wasserstein distance between two probability measures $\mu$ and $\nu$ on $\reals$ is defined by the dual form:
\begin{equation}
    W_1(\mu, \nu) = \sup_{f \in \Lip(1)} \left| \int f d\mu - \int f d\nu \right|,
\end{equation}
where $\Lip(1)$ denotes the set of $1$-Lipschitz functions $f: \reals \to \reals$.

\subsection{The $\Psi_{r}$ Norm and Asymptotic Sub-Gaussianity}

A central feature of our analysis is a tail condition that is milder than uniform sub-Gaussianity. We consider a sequence of parameters $r_n$ that may grow with $n$.

\begin{definition}[Truncated $\Psi_r$ Norm]
    For a real number $r \ge 2$, let $\Psi_r(x) = \sum_{j=1}^{\lfloor r/2 \rfloor} \frac{x^{2j}}{j!}$. The $\Psi_r$ norm of a random variable $X$ is defined as:
    \begin{equation}
        \|X\|_{\Psi_r} = \inf \left\{ K > 0 : \ex \left[\Psi_r(|X|/K)\right] \le 1 \right\}.
    \end{equation}
\end{definition}

If $r \to \infty$, $\Psi_r(x)$ approaches $\exp(x^2) - 1$, recovering the standard $\psi_2$ (sub-Gaussian) norm. However, for finite $r$, $\|X\|_{\Psi_r} < \infty$ only implies that moments up to order $r$ behave like those of a sub-Gaussian variable. This allows us to work with variables that may not be strictly sub-Gaussian for fixed $n$, provided their moments are well-behaved up to a growing order $r_n$.

\begin{lemma}[Moments and Tails]
\label{lem:psi_properties}
    Let $r \ge 2$ and $\|X\|_{\Psi_r} \le K$. Then:
    \begin{enumerate}[(a)]
        \item (Moment Bound) For all $p \in [2, 2\lfloor r/2 \rfloor]$, $(\ex |X|^p)^{1/p} \le C_1 K \sqrt{p}$.
        \item (Tail Bound) There exist constants $c_0, c_1 > 0$ such that for $t \ge c_0 K$:
        $$ \pr(|X| \ge t) \le \exp\left(-c_1 \min\left\{ \frac{t^2}{K^2}, \lfloor r/2 \rfloor \right\}\right). $$
    \end{enumerate}
\end{lemma}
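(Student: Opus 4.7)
The plan starts from the key observation that $\Psi_r$ is a sum of non-negative terms, so the hypothesis $\ex[\Psi_r(|X|/K)] \le 1$ forces, term by term,
\[
\ex|X|^{2j} \;\le\; K^{2j}\, j! \qquad \text{for every } j \in \{1, \ldots, m\},
\]
where $m = \lfloor r/2 \rfloor$. This single family of inequalities drives both conclusions.

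For part (a), Stirling's approximation gives $(j!)^{1/(2j)} \le C\sqrt{j}$ for a universal $C$, which turns the moment bound above into $(\ex|X|^{2j})^{1/(2j)} \le C_1 K\sqrt{j}$ for every even integer $2j \le 2m$. To extend this to an arbitrary real $p \in [2, 2m]$, I would pass to the next even integer $p' = 2\lceil p/2 \rceil \le 2m$ and use Lyapunov's inequality $(\ex|X|^p)^{1/p} \le (\ex|X|^{p'})^{1/p'}$, absorbing the harmless factor $\sqrt{p'/p} \le \sqrt{2}$ into $C_1$.

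For part (b), the usual Chernoff route via the MGF is unavailable because the Orlicz function is truncated, so I would instead apply Markov's inequality directly to $\Psi_r(|X|/K)$, using monotonicity of $\Psi_r$ on $[0,\infty)$:
\[
\pr(|X| \ge t) \;=\; \pr\bigl(\Psi_r(|X|/K) \ge \Psi_r(t/K)\bigr) \;\le\; \frac{1}{\Psi_r(t/K)},
\]
and then analyze the denominator in two regimes. In the \emph{sub-Gaussian regime} $t^2/K^2 \le m$, I would invoke the Poisson-median estimate $\sum_{j=0}^{m} \lambda^j/j! \ge \tfrac{1}{2}\, e^{\lambda}$ (valid when the integer $m$ satisfies $m \ge \lambda$) with $\lambda = t^2/K^2$, which gives $\Psi_r(t/K) + 1 \ge \tfrac{1}{2} e^{t^2/K^2}$; once $t \ge c_0 K$ for a sufficiently large universal $c_0$, this yields $\pr(|X|\ge t) \le \exp(-c_1 t^2/K^2)$, which is of the required form. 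In the complementary \emph{heavy-tail regime} $t^2/K^2 > m$, I would instead retain only the top-order moment $\ex|X|^{2m} \le K^{2m} m!$, apply Markov at exponent $2m$, and use Stirling to bound $m!\,(K/t)^{2m} \le e^{-c_1 m}$ provided $t \ge c_0 K\sqrt{m}$ (for a possibly enlarged universal $c_0$).

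The main obstacle I anticipate is not any individual computation but the bookkeeping to make the two regimes dovetail uniformly in $r$: the thresholds $c_0, c_1$ must be chosen so that the bound $\exp(-c_1 \min\{t^2/K^2, m\})$ is actually attained on both sides of the crossover $t \asymp K\sqrt{m}$, with no gap. The one genuinely external ingredient is the Poisson-median inequality (equivalently, that the median of a $\mathrm{Poi}(\lambda)$ variable lies within one of $\lambda$), which is classical but worth invoking explicitly to justify the sub-Gaussian-regime bound on $\Psi_r$.
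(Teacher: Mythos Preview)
The paper does not actually prove this lemma: it remarks that part~(a) ``follows standard Orlicz norm arguments'' and for part~(b) simply cites \cite[Lemma~25]{Vinas24}. Your proposal is correct and supplies exactly the kind of argument the paper gestures at. The termwise extraction $\ex|X|^{2j} \le K^{2j} j!$ from the Orlicz condition, followed by Stirling and Lyapunov interpolation, is the standard route for~(a); and for~(b), Markov applied to $\Psi_r(|X|/K)$ together with the Poisson lower-tail estimate is the natural substitute for a Chernoff bound when the Orlicz function is truncated.

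One small simplification worth noting for part~(b): the separate ``heavy-tail'' analysis via the top moment is not strictly needed once $m = \lfloor r/2\rfloor$ is moderately large. If the sub-Gaussian regime delivers $\pr(|X| \ge t) \le e^{-c_1 t^2/K^2}$ for all $c_0 K \le t \le K\sqrt{m}$, then evaluating at the crossover $t = K\sqrt{m}$ gives $\pr(|X|\ge K\sqrt{m}) \le e^{-c_1 m}$, and monotonicity of the tail immediately yields $\pr(|X|\ge t) \le e^{-c_1 m}$ for every $t > K\sqrt{m}$, which is exactly the target since $\min\{t^2/K^2, m\} = m$ there. This handles the dovetailing you flag as the main obstacle for free, leaving only the finitely many small values of $m$ (those with $m < c_0^2$, so that $K\sqrt{m} < c_0 K$) to be treated directly by Markov at exponent $2m$ with an adjusted constant.
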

\noindent The proof of part (a) follows standard Orlicz norm arguments. For part~(b), see~\cite[Lemma 25]{Vinas24}.

\section{Main Results}

Our main result establishes $W_1$ concentration based on moment variance and tail decay.

\begin{theorem}
\label{thm:main_result}
    Let $\{Y_{i,n}\}_{i=1}^n$ be a triangular array of real-valued random variables. Let $\mu_n$ be their empirical measure and $\bar{\mu}_n = \ex[\mu_n]$. Assume there exists a sequence $r_n = \omega(1)$ such that:
    \begin{enumerate}[label=(\alph*)]
        \item \textbf{Uniform Tail Control:} The variables are uniformly $\Psi_{r_n}$ sub-Gaussian. That is, there exists $\zeta > 0$ such that
        $
       \sup_{i\in [n]} \norm{Y_{i,n}}_{\Psi_{r_n}} \le \zeta.
        $
        \item \textbf{Moment Concentration:} For every fixed integer $k \ge 1$, the variance of the empirical moments vanishes:
        $$ \var\left(\frac{1}{n}\sum_{i=1}^n Y_{i,n}^k\right) \to 0 \quad \text{as } n \to \infty. $$
    \end{enumerate}
    Then, the expected Wasserstein distance vanishes:
    $$ \ex \bigl[ W_1(\mu_n, \bar \mu_n) \bigr] \to 0 \quad \text{as } n \to \infty. $$
\end{theorem}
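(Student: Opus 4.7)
Fix $\eps > 0$; the goal is to show $\ex W_1(\mu_n,\bar\mu_n) \le C\eps$ for all sufficiently large $n$. Since adding a constant to the test function does not change the Kantorovich--Rubinstein pairing, we may restrict the supremum in $W_1$ to $f \in \Lip(1)$ with $f(0) = 0$, so that $|f(x)| \le |x|$. The plan is to (i) truncate $f$ to a window $[-T,T]$, (ii) approximate the truncation on that window by a Chebyshev--Jackson polynomial $p$ of fixed degree $D$, and (iii) use assumption (b) to control the empirical polynomial moments. Each error is driven to $O(\eps)$ for suitable $T = T(\eps)$, $D = D(\eps)$, and $n$ large. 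Because every bound below will be uniform over $f\in\Lip(1)$ with $f(0) = 0$, taking the supremum in $f$ and then expectation is harmless.

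\textbf{Step 1: Lipschitz truncation.} Let $\chi_T(x) = (x\wedge T)\vee(-T)$ and replace $f$ by $g = f\circ\chi_T$, which remains $1$-Lipschitz, is bounded by $T$, and satisfies $|f(x)-g(x)|\le (|x|-T)_+$. Then $\ex\int|f-g|\,d\mu_n \le n^{-1}\sum_i\int_T^\infty\pr(|Y_{i,n}|\ge s)\,ds$. By Lemma~\ref{lem:psi_properties}(b), once $n$ is large enough that $\lfloor r_n/2\rfloor \ge cT^2/\zeta^2$ the tail is genuinely sub-Gaussian, and this integral is $O(\zeta e^{-cT^2/\zeta^2})$; the same bound holds for $\bar\mu_n$. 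Taking $T = T(\eps)$ large enough makes both contributions $\le\eps$.

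\textbf{Step 2: polynomial approximation and moment reduction.} Let $p(x) = \sum_{k=0}^{D} c_k T_k(x/T)$ be the Chebyshev--Jackson approximation of $g$ on $[-T,T]$. Jackson's theorem applied to the Lipschitz function $u\mapsto g(Tu)$ yields $\|g-p\|_{\infty,[-T,T]} \le CT/D$ and, uniformly over $1$-Lipschitz $g$, the Chebyshev coefficient bound $|c_k|\le CT/k$ for $k\ge 1$. Choose $D \asymp T/\eps$ so that the sup error is $\le\eps$. Expanding $p$ in monomials as $\sum_{j=0}^D a_j x^j$, the bound on the $c_k$ translates into a $g$-uniform bound $|a_j|\le A_j(T,D)$, so that
\[
\ex\Bigl|\int p\,d(\mu_n-\bar\mu_n)\Bigr| \;\le\; \sum_{j=0}^{D} A_j(T,D)\,\sqrt{\var\bigl(n^{-1}\textstyle\sum_i Y_{i,n}^j\bigr)}.
\]
For $\eps$ (hence $T,D$) fixed, each summand tends to zero by assumption~(b), so the sum is $\le\eps$ for $n$ large.

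\textbf{Step 3: polynomial tail (main obstacle).} The delicate step is bounding $\ex\int_{|x|>T}|p|\,d\mu_n$: on $|x|>T$ the Chebyshev polynomials satisfy $|T_k(x/T)|\le(2|x|/T)^k$, so $p$ can be enormous precisely where $\mu_n$ carries tail mass. Combining this with $|c_k|\le CT/k$ gives $|p(x)|\le CT(2|x|/T)^D$ off the window; Cauchy--Schwarz together with Lemma~\ref{lem:psi_properties}(a) (which gives $\ex|Y_{i,n}|^{2D}\le(C\zeta\sqrt{D})^{2D}$, valid for $D\le\lfloor r_n/2\rfloor$) and the tail bound in (b) produces
\[
\ex\int_{|x|>T}|p|\,d\mu_n \;\le\; CT\Bigl(\tfrac{C'\zeta\sqrt{D}}{T}\Bigr)^{\!D} e^{-cT^2/(2\zeta^2)}.
\]
Taking $T = C''\zeta\sqrt{D}$ with $C''$ large forces both factors to decay geometrically in $D$. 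The resulting parameter regime $D(\eps)\asymp 1/\eps^2$, $T(\eps)\asymp 1/\eps$ is self-consistent with Step~2, and the requirement $r_n\ge 2D$ is eventually satisfied because $r_n = \omega(1)$. Summing the contributions from all three steps and letting $\eps\downarrow 0$ yields the theorem. The role of the Chebyshev--Jackson kernel is precisely to deliver both the optimal Lipschitz approximation rate $O(T/D)$ and the coefficient control $|c_k|=O(T/k)$: without the latter the polynomial's off-window growth would overwhelm the $\Psi_{r_n}$ tails, and without the former we could not reduce $D$ to a fixed quantity that the variance assumption can handle term by term.
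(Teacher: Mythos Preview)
Your proposal is correct and follows essentially the same strategy as the paper: truncate to a bounded window, approximate by a Chebyshev--Jackson polynomial with uniformly controlled coefficients, bound the monomial pairings via the moment-variance assumption, and control the polynomial's off-window growth by Cauchy--Schwarz against the $\Psi_{r_n}$ moment and tail bounds. Your version is in fact slightly leaner: because the Jackson coefficient bounds $|a_j|\le A_j(T,D)$ hold uniformly over the Lipschitz class, you pass the supremum over $f$ through directly and dispense with the Arzel\`a--Ascoli $\eps$-net that the paper inserts, and you use the smooth truncation $f\circ\chi_T$ in place of $f\ind_{[-B,B]}$ --- both are harmless variations on the same argument.
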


    Condition (b) is satisfied trivially if the variables are i.i.d., but it also holds for many dependent systems where correlations decay sufficiently fast (e.g., spectral statistics of random matrices, node features in sparse graphs). Condition (a) ensures that while the variables need not be bounded, their mass does not escape to infinity too quickly. 

\subsection{Extension to Random Vectors}

The method of moments is particularly powerful in high dimensions where computing projected moments $\ex \langle \theta, Y \rangle^k$ is often feasible. We extend Theorem \ref{thm:main_result} to random vectors in $\reals^d$.

We denote the absolute first moment of a measure $\mu$ on $\reals^d$ by $M_1(\mu) = \int \|x\| d\mu(x)$.

\begin{theorem}
\label{thm:vector_result}
    Let $Y_{i,n}$ be random vectors in $\reals^d$. Let $\mu_n = \frac1n \sum_{i=1}^n \delta_{Y_{i,n}}$ and $\bar \mu_n = \ex \mu_n$. Assume there exists a sequence $r_n = \omega(1)$ such that for every unit vector $\theta \in S^{d-1}$:
    \begin{enumerate}[label=(\alph*)]
        \item \textbf{Uniform Projected Tails:} The projections are uniformly $\Psi_{r_n}$ sub-Gaussian:
            $$ \sup_{i\in [n]} \norm{\langle\theta, Y_{i,n}\rangle}_{\Psi_{r_n}} \le \zeta(\theta) < \infty. $$
        \item \textbf{Projected Moment Concentration:} For every fixed $k \ge 1$:
            $$ \var\left(\frac{1}{n}\sum_{i=1}^n \langle\theta, Y_{i,n}\rangle^k\right) \to 0 \quad \text{as } n\to\infty. $$
        \item \textbf{Bounded First Moment:} $\sup_{n\ge 1} M_1(\bar \mu_n) < \infty$.
    \end{enumerate}
    Then, $\ex \bigl[ W_1(\mu_n, \bar \mu_n) \bigr] \to 0$ as $n\to\infty$.
\end{theorem}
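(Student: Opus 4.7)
The plan is to mirror the proof of Theorem~\ref{thm:main_result}: approximate 1-Lipschitz test functions on a large ball $B_R \subset \reals^d$ by polynomials of fixed degree, reduce each multivariate monomial integral to projected moments via polarization, and apply condition~(b). For the truncation, condition~(a) with $\theta = e_j$ and Lemma~\ref{lem:psi_properties}(b) applied coordinatewise yield a uniform tail bound on $\pr(\|Y_{i,n}\| > t)$; combined with condition~(c), this furnishes, for each $\eta > 0$, a radius $R = R(\eta)$ with
\[
\int_{\|x\|>R}\|x\|\,d\bar\mu_n + \ex\int_{\|x\|>R}\|x\|\,d\mu_n \le \eta,\quad\text{uniformly in } n.
\]
Normalizing $f \in \Lip(1)$ to $f(0)=0$ so that $|f(x)| \le \|x\|$ absorbs the off-$B_R$ Wasserstein contribution into $\eta$.

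\textbf{Step 2 (Approximation and polarization).} On the compact ball $B_R$, a tensorized Chebyshev-Jackson kernel (or any multivariate Jackson-type approximation) produces a polynomial $p_f$ of total degree $D = D(R, \eta)$ with $\|f - p_f\|_{L^\infty(B_R)} \le \eta$; norm equivalence on the finite-dimensional space of polynomials of degree $\le D$ bounds the monomial coefficients $|c_\alpha|$ by some $C(R, D)$, uniformly in $f \in \Lip(1, B_R)$. The polarization identity
\[
k!\, x_{i_1}\cdots x_{i_k} = \sum_{\epsilon\in\{\pm 1\}^k}\epsilon_1\cdots\epsilon_k\Bigl(\sum_{j=1}^{k}\epsilon_j x_{i_j}\Bigr)^{k}
\]
expresses each monomial $x^\alpha$ of degree $k = |\alpha|$ as a fixed linear combination (after normalizing the direction) of powers $\ip{\theta_\ell}{x}^k$ over finitely many unit vectors $\theta_\ell$. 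Condition~(b) with Cauchy-Schwarz then gives $\ex\bigl|\int x^\alpha\,d(\mu_n - \bar\mu_n)\bigr| \to 0$ as $n\to\infty$ for each fixed $\alpha$.

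\textbf{Step 3 (Assembly).} Writing
\[
\int f\,d(\mu_n - \bar\mu_n) = \int(f - p_f)\,d\mu_n - \int(f - p_f)\,d\bar\mu_n + \int p_f\,d(\mu_n - \bar\mu_n)
\]
and splitting the first two integrals over $B_R$ and $B_R^c$ produces four terms to bound: the approximation error $\le 2\eta$ on $B_R$; the $\|x\|$-tail $\le 2\eta$ by Step~1; polynomial tails $\ex\int_{B_R^c}|p_f|\,d\mu_n$, controlled by Cauchy-Schwarz via Lemma~\ref{lem:psi_properties}(a) (valid once $r_n \ge 2D$) combined with the uniform tail bound~(b), and vanishing as $R \to \infty$; and the polynomial-moment term bounded by $C(R,D)\max_{|\alpha| \le D}\ex\bigl|\int x^\alpha\,d(\mu_n - \bar\mu_n)\bigr|$, which vanishes as $n \to \infty$ for fixed $R, D$ by Step~2. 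Taking $n\to\infty$ with $R, D$ fixed, then $\eta \to 0$ (with $R, D$ rechosen accordingly), yields $\ex W_1(\mu_n, \bar\mu_n) \to 0$. The main obstacle is the tension between the approximation degree $D$ (which must grow to drive $\eta \to 0$) and the range of well-behaved moments $\sim r_n$; this is resolved by the iterated limit, since $r_n = \omega(1)$ ensures that for every fixed $D$, $r_n \ge 2D$ eventually.
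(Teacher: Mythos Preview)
Your argument is essentially correct but follows a genuinely different route from the paper. The paper does \emph{not} redo the polynomial-approximation argument in $\reals^d$; instead it proves a lifting principle (Proposition~\ref{prop:lifting}): using the Bayraktar--Guo inequality $W_1(\mu,\nu)\le C(d)\sup_{\theta\in S^{d-1}}W_1(\mu_\theta,\nu_\theta)$ together with an $\epsilon$-net on the sphere and the Lipschitz dependence $\theta\mapsto W_1(\mu_{n,\theta},\eta_{n,\theta})$ (Lemma~\ref{lem:proj_lip}), it reduces the vector statement to the scalar Theorem~\ref{thm:main_result} applied to the one-dimensional variables $\langle\theta,Y_{i,n}\rangle$ for each fixed $\theta$. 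Your approach instead repeats the scalar proof directly in $\reals^d$, replacing univariate Jackson approximation by a multivariate version and using polarization to reduce mixed monomials $x^\alpha$ to projected powers $\langle\theta_\ell,x\rangle^{|\alpha|}$ so that condition~(b) can be invoked. The paper's route is more modular---it treats the scalar theorem as a black box and isolates the passage to $\reals^d$ in a clean, reusable proposition---but it imports the nontrivial Bayraktar--Guo result. Your route is more self-contained, avoiding that external ingredient, at the cost of reworking the approximation/tail machinery in higher dimensions. One point to tighten: your appeal to ``norm equivalence'' for the coefficient bound $C(R,D)$ is not quite enough on its own---for the polynomial-tail term to vanish you need $C(R,D)$ to grow at most exponentially in $D$ (so that $R^2$ beats $D\log D$ with $D\asymp R/\eta$), which does hold by Chebyshev/Markov-type inequalities but should be stated rather than left implicit; relatedly, $R$ must be chosen large enough to control \emph{both} the Lipschitz tail and the polynomial tail simultaneously, not just the former as your Step~1 suggests.
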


\section{Polynomial approximations}

To prove the main result, we require results on polynomial approximation of Lipschitz functions. Standard Weierstrass approximation yields a rate of $m^{-1/2}$ for approximation by polynomials of degree $m$, which is too slow. The optimal rate of approximation for Lipschitz functions is $m^{-1}$, which will be sufficient. However, we also need to control the size of the coefficients of the polynomial.
We use the Chebyshev--Jackson approximation, which achieves this balance.

We begin with a bound on the coefficients of Chebyshev polynomials.

\begin{lemma} 
\label{lem:chebyshev_coeffs}
    Let $T_k$ be the $k$th Chebyshev polynomial, and let $[T_k]_j$ be the coefficient of $x^j$ in $T_k(x)$. Then, $|[T_k]_0| \le 1$ and
    \begin{align*}
        \max_{1 \le j \le k} |[T_k]_j| \le (1+\sqrt 2)^k \le 3^k.
    \end{align*}
\end{lemma}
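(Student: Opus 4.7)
The plan is to exploit the three-term recurrence $T_{k+1}(x) = 2x\,T_k(x) - T_{k-1}(x)$, which translates directly into a recurrence on coefficients and hence on the maximum magnitude $M_k := \max_{0 \le j \le k} |[T_k]_j|$. Reading off the coefficient of $x^j$ in both sides of the recurrence gives $[T_{k+1}]_j = 2\,[T_k]_{j-1} - [T_{k-1}]_j$, and taking absolute values plus a maximum over $j$ yields
\begin{equation*}
    M_{k+1} \le 2 M_k + M_{k-1}.
\end{equation*}
The characteristic equation $\lambda^2 - 2\lambda - 1 = 0$ has dominant root $1+\sqrt 2$, so this linear recurrence is exactly tuned to produce the claimed exponential bound.

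The main work is then a short induction. The base cases $M_0 = 1$ and $M_1 = 1$ are immediate from $T_0 = 1$ and $T_1(x) = x$, and both satisfy $M_k \le (1+\sqrt 2)^k$. For the inductive step, I use the key algebraic identity $2(1+\sqrt 2) + 1 = 3 + 2\sqrt 2 = (1+\sqrt 2)^2$, which gives
\begin{equation*}
    M_{k+1} \le 2(1+\sqrt 2)^k + (1+\sqrt 2)^{k-1} = (1+\sqrt 2)^{k-1}\bigl(2(1+\sqrt 2)+1\bigr) = (1+\sqrt 2)^{k+1},
\end{equation*}
closing the induction. Since $M_k$ dominates $\max_{1 \le j \le k}|[T_k]_j|$, this yields the first inequality; the second is the trivial numerical bound $1+\sqrt 2 < 3$.

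The constant-coefficient claim $|[T_k]_0| \le 1$ is handled separately by a direct evaluation: $[T_k]_0 = T_k(0) = \cos(k\pi/2) \in \{-1, 0, 1\}$ from the trigonometric representation $T_k(\cos\theta) = \cos(k\theta)$ at $\theta = \pi/2$. This is sharper than what the recurrence argument delivers, which is why the lemma singles out the $j=0$ case.

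There is no real obstacle here: the only slightly delicate point is recognizing that $1+\sqrt 2$ is precisely the dominant root of the recurrence arising from the Chebyshev three-term relation, so the bound cannot be improved by this method. An alternative route via the Cauchy integral $[T_k]_j = \frac{1}{2\pi i}\oint_{|z|=1} T_k(z) z^{-j-1}\,dz$ together with the Joukowski-type formula $T_k(z) = \tfrac{1}{2}\bigl((z+\sqrt{z^2-1})^k + (z-\sqrt{z^2-1})^k\bigr)$ gives the same constant $(1+\sqrt 2)^k$ via the estimate $|z \pm \sqrt{z^2-1}| \le 1+\sqrt 2$ on $|z|=1$, but the recurrence proof is shorter and entirely elementary, so I would present that.
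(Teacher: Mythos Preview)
Your proof is correct and follows essentially the same route as the paper: both use the three-term recurrence $T_{k+1}=2xT_k-T_{k-1}$ to obtain $|[T_{k+1}]_j|\le 2|[T_k]_{j-1}|+|[T_{k-1}]_j|$ and close the induction via $2c+1=c^2$ at $c=1+\sqrt2$. Your treatment of the constant term via $T_k(0)=\cos(k\pi/2)\in\{-1,0,1\}$ is in fact slightly more accurate than the paper's assertion that $[T_k]_0\in\{0,1\}$ (consider $T_2(x)=2x^2-1$), though both yield $|[T_k]_0|\le 1$.
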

\begin{proof}
    The first part is clear, since $[T_k]_0 \in \{0,1\}$. For the second part, from the recurrence relation $T_{k+1}(x) = 2x T_k(x) - T_{k-1}(x)$, we have
    \begin{align*}
        |[T_{k+1}]_j| \le 2|[T_k]_{j-1}| + |[T_{k-1}]_{j}|.
    \end{align*}
    Assuming the result holds as $ \max_{1 \le j \le k} |[T_k]_j| \le c^k$ for some constant $c$ and  
    for all $T_r, r \le k$, we have $|[T_{k+1}]_j| \le 2 \cdot c^{k} + c^{k-1} $. Then, if $2 c^k + c^{k-1} \le c^{k+1}$, the result follows by induction. This inequality holds for $c \ge 1+\sqrt{2}$. The proof is complete.
\end{proof}
    
\begin{lemma}[Chebyshev--Jackson approximation]
\label{lem:chebyshev_approx}
    Let $B \ge 3$. Then, for any $f: [-B,B] \to \mathbb{R}$ $1$-Lipschitz with $f(0) = 0$, there exists a polynomial $P(x) = \sum_{j=0}^m c_j x^j$, with $m \in 4 \nats$, such that
    \begin{align*}
        \sup_{x \,\in\, [-B,B]} |f(x) - P(x)| \le \frac{18 B}{m}, \quad \text{and} \quad |c_j| \le 6B \cdot 3^{m-j}, \quad \text{for all } j \ge 0.
    \end{align*}
\end{lemma}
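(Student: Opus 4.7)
The plan is to reduce to the classical Chebyshev--Jackson approximation on $[-1,1]$ by rescaling. Set $g(y) = f(By)/B$ for $y \in [-1,1]$; then $g$ is $1$-Lipschitz with $g(0)=0$, so $\|g\|_\infty \le 1$. If I can produce a degree-$m$ polynomial $Q(y) = \sum_{k=0}^m a_k T_k(y)$ on $[-1,1]$ whose sup-error is at most $18/m$ and whose Chebyshev coefficients satisfy $|a_k| \le 2$, then $P(x) := B\,Q(x/B)$ will automatically satisfy $\sup_{[-B,B]}|f-P| \le 18B/m$, and the monomial coefficients of $P$ can be read off by expanding each $T_k$.

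For the $[-1,1]$ approximation I would invoke the standard Chebyshev--Jackson theorem built from a fourth-order Jackson kernel. This yields a linear operator $J_m$ whose output is a trigonometric polynomial of degree $m$ in $\theta = \arccos y$---hence an algebraic polynomial of degree $m$ in $y$---with approximation error at most an explicit constant times $\omega_g(1/m) \le 1/m$; the constraint $m \in 4\nats$ reflects that the kernel is the square of a second-order Fejér-type kernel. The $a_k$ are the Chebyshev--Fourier coefficients of $g$ multiplied by damping factors lying in $[0,1]$, which together with $|\hat g(k)| \le 2\|g\|_\infty$ gives $|a_k| \le 2$. The constant $18$ in the target bound is loose enough to absorb the standard Jackson constant.

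Finally I would convert to monomial form. Expanding $T_k(y) = \sum_{j=0}^k [T_k]_j y^j$ and substituting $y = x/B$ gives $c_j = B^{1-j}\sum_{k=j}^m a_k [T_k]_j$. Lemma~\ref{lem:chebyshev_coeffs} supplies $|[T_k]_j| \le 3^k$, and summing a geometric series in $k$ then yields $|c_j| \le 3B \cdot 3^m \cdot B^{-j}$. The hypothesis $B \ge 3$ is invoked precisely at this step: it gives $B^{-j} \le 3^{-j}$, collapsing the bound to $|c_j| \le 3B \cdot 3^{m-j} \le 6B \cdot 3^{m-j}$. The only nontrivial external input is the Chebyshev--Jackson bound with a kernel whose Chebyshev spectrum is in $[0,1]$; once that is cited, the rest is rescaling and the bookkeeping above.
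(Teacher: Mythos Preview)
Your proposal is correct and follows essentially the same route as the paper: both invoke the Chebyshev--Jackson operator with damping factors $\lambda_{k,m}\in[0,1]$ to get the $18L/m$ error, bound the Chebyshev coefficients by $2$ (you via $|\hat g(k)|\le 2\|g\|_\infty$, the paper via the sharper $|a_k(g)|\le \sqrt{8/\pi}\,L/k$ cruded to $2L$), expand into monomials using Lemma~\ref{lem:chebyshev_coeffs} and a geometric sum, and finally use $B\ge 3$ to convert $B^{-j}$ to $3^{-j}$. The only cosmetic difference is that you normalize $g=f(B\cdot)/B$ to be $1$-Lipschitz and then multiply $P$ by $B$, whereas the paper keeps $g=f(B\cdot)$ as $B$-Lipschitz; the bookkeeping is otherwise identical.
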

    
\begin{proof}
    Consider an $L$-Lipschitz function $g$ on $[-1,1]$ with $g(0) = 0$. Then, for each $m \in 4\nats$, there is a polynomial of the form
    \[
        Q_m(x) = \sum_{k=0}^m \lambda_{k,m} a_k(g) T_k(x)
    \]
    where $\lambda_{k,m}$ are derived from a Jackson kernel, satisfying $0 \le \lambda_{k,m} \le 1$ and $a_k(g)$ are the Chebyshev coefficients of $g$, such that
    \[
    \sup_{x \in [-1,1]} |g(x) - Q_m(x)| \le \frac{18 L}{m}, \quad |a_k(g)| \le \frac{\sqrt{8/\pi} L}{k}, \quad k \ge 1.
    \]
    See Facts~3.2 and 3.3 in~\cite{braverman2022}.
    The Chebyshev coefficients are given by
    \[
        a_k(g) = \frac{2}{\pi} \int_{-1}^1 \frac{g(x) T_k(x)}{\sqrt{1 - x^2}} dx, \quad k \ge 1,
    \]
    and for $k = 0$, the same formula holds with $2/\pi$ replaced with $1/\pi$. For $k = 0$, using $g(0) = 0$ so that $|g(x)| \le L |x|$ for all $x \in [-1,1]$, and $T_0(x)=1$, we have
    \begin{align*}
        |a_0(g)| &\le \frac1\pi \int_{-1}^1 \frac{L |x|}{\sqrt{1-x^2}}dx = \frac{2L}\pi.
    \end{align*}
    Thus a crude upper bound that works for all $k \ge 0$ is $|a_k(g)| \le 2L$.

    Let $a_{k,m} = \lambda_{k,m} a_k(g)$ and note that $|a_{k, m}| \le 2L$ for all $k \ge 0$, by the above discussion.
    Rewriting $Q_m(x) = \sum_{j=0}^m b_j x^j$, one has
    $
    b_j = \sum_{k=j}^m a_{k,m}  [T_k]_j
    $
    where $[T_k]_j$ is the coefficient of $x^j$ in $T_k(x)$. 
    It follows from Lemma \ref{lem:chebyshev_coeffs} that
    \begin{align*}
        |b_j| \le \sum_{k=j}^m 2 L \cdot 3^k \le 2L \cdot 3^m \sum_{k=j}^m 3^{k-m} \le  2L \cdot 3^m \frac{1}{1-3^{-1}} \le 6 L \cdot 3^m
    \end{align*}
    for all $j \ge 0$. 

    If $f$ is 1-Lipschitz on $[-B,B]$ with $f(0) = 0$, then $g(x) = f(Bx)$ is $B$-Lipschitz on $[-1,1]$ with $g(0) = 0$. Let $Q_m$ be the above polynomial for $g$, and let $P(x) = Q_m(x/B) = \sum_{j=0}^m (b_j / B^j) x^j =: \sum_{j=0}^m c_j x^j$.  Then,
    \[
    |c_j| \le 6 B \frac{3^m}{B^j} \le 6B \cdot 3^{m-j}
    \]
    assuming $B \ge 3$. We also have $\sup_{x \in [-B,B]} |f(x) - P(x)|  = \sup_{x \in [-1,1]} |g(x) - Q_m(x)| \le \frac{18B}{m}$. The proof is complete.
\end{proof}

\section{Proof of Main Result (Scalar Case)}

\begin{proof}[Proof of Theorem \ref{thm:main_result}]
     Let us write $\Lip(f) = \sup_{x \neq y} \frac{|f(x)-f(y)|}{|x-y|}$ for the Lipschitz constant of $f$. Consider the set of functions
    \[
    \Lset = \{f: \reals \to \reals \mid \Lip(f) \leq 1, f(0) = 0\}, \quad 
    \Lset_B =  \{f \ind_{|x| \le B} \mid f \in \Lset,\; B > 0\}.
    \]
    Let $\varpi_n := \mu_n - \bar \mu_n$. By the dual characterization of $W_1$, we have
    \[
    W_1(\mu_n, \bar \mu_n) \le \sup_{f \in \Lset} |\varpi_n f|.
    \]
    By breaking $f = f \ind_{|x| \le B} + f \ind_{|x| > B}$, we have
    \begin{align}\label{eq:wass_conv_break}
        W_1(\mu_n, \bar \mu_n) &\le  \sup_{f \in \Lset_B} |\varpi_n f | + \sup_{f \in \Lset} |\varpi_n (f \ind_{|x| > B})|.
    \end{align}
    Fix $\epsilon \in (0,1)$ and consider the second term first.
    For any integrable $f$, we have
    \begin{align}
        |\varpi_n(f \ind_{|x| > B})| &\le |\mu_n(f \ind_{|x| > B})| + |\bar \mu_n(f \ind_{|x| > B})| \notag \\
        &\le \mu_n (| f | \ind_{|x| > B}) + \bar \mu_n (| f | \ind_{|x| > B}). \label{eq:wass_diff_f_tail}
    \end{align}
    For $f \in \Lset$, we have $|f(x)| = |f(x) - f(0)| \le |x - 0|$. 

    Then, we have
    \begin{align*}
        |\varpi_n(f \ind_{|x| > B})| &\le \mu_n (| x | \ind_{|x| > B}) + \bar \mu_n (| x | \ind_{|x| > B})
    \end{align*}
    Taking the supremum over $f \in \Lset$ and then expectation, we have
    \begin{align*}
        \ex \sup_{f \in \Lset} |\varpi_n(f \ind_{|x| > B})| 
        &\;\le\; 2 \bar \mu_n( |x| \ind_{|x| > B}) = \frac2{n} \sum_{i=1}^n \ex \bigl( |Y_{i,n}| \ind_{\{|Y_{i,n}| > B\}} \bigr).
    \end{align*}
    Take $n$ large enough so that 
    \begin{equation}\label{eq:rn:lower:1}
        r_n \ge 2\Bigl(\frac{B^2}{\zeta^2}+1\Bigr)
    \end{equation}
    which we will verify at the end. Also, take $B \ge B_0(\zeta) := c_0 \zeta$ where $c_0$ is the constant in Lemma~\ref{lem:psi_properties}(b). Then, by this lemma, we have $\pr(|Y_{i,n}| > B) \le \exp(-c_1 B^2/\zeta^2)$, and by Lemma~\ref{lem:psi_properties}(a), we have 
    $\ex[Y_{i,n}^2] \le 2 C_1^2 \zeta^2$. Then, by Cauchy-Schwarz, we have
    \begin{align*}
        \ex \bigl( |Y_{i,n}| \ind_{\{|Y_{i,n}| > B\}} \bigr) \le \sqrt{\ex[|Y_{i,n}|^2] \cdot \pr(|Y_{i,n}| > B)} \le \sqrt{2} C_1 \zeta \cdot \exp(-c B^2/2\zeta^2).
    \end{align*}
    Taking $B \ge B_1(\zeta)$ for $B_1(\zeta)$ large enough, the RHS can be made $\le \epsilon$, which gives
    \begin{align*}
        \ex \sup_{f \in \Lset} |\varpi_n(f \ind_{|x| > B})|  \le 2\epsilon.
    \end{align*}

    Consider now the first term in~\eqref{eq:wass_conv_break}.
    Viewing $\Lset_B$ as a subspace of $(C_b([-B,B]), \|\cdot\|_\infty)$, by restricting to $[-B,B]$, $\Lset_B$ is uniformly bounded and equicontinuous, hence by Arzel\`{a}--Ascoli, it is relatively compact in the sup-norm topology. This, in turn, implies $\Lset_B$ is totally bounded.  Then, there exists $f_1, \ldots, f_M \in \Lset_B$ that form an $\epsilon$-net for $\Lset_B$ in sup-norm, for some $M = M(\epsilon,B) < \infty$. That is, for any $f \in \Lset_B$, there is $f_\ell$ such that $\infnorm{f-f_\ell} \leq \epsilon$, hence
    \begin{align*}
    |\varpi_n f| &\;\le\; |\varpi_n(f-f_\ell)| + |\varpi_n f_\ell|\\
    &\;\le\; \norm{\varpi_n}_{\text{TV}} \cdot \infnorm{f-f_\ell} + |\varpi_n f_\ell|
    \;\le\; 2 \epsilon + |\varpi_n f_\ell|. 
    \end{align*}
    Taking supremum over $f \in \Lset_B$, we have
    \begin{align*}
    \sup_{f \in \Lset_B} |\varpi_nf| 
    &\;\le\; 2 \epsilon + \sup_{\ell \in [M]} |\varpi_n f_\ell|. 
    \end{align*}
    
    Take $B \ge 3$. By Lemma~\ref{lem:chebyshev_approx}, each $f_\ell$ admits a (truncated) polynomial $Q_{\ell}(x) = \ind_{\{|x| \leq  B\}}\cdot \sum_{j=0}^{m} c_{j\ell} x^j $, with $m = 4 \lceil C_2 B / \epsilon \rceil \in 4\nats$ (can take $C_2=18$) such that
    \[
    \infnorm{f_\ell - Q_\ell} \le \epsilon,
    \]
    and $|c_{j\ell}| \le 6B \cdot 3^{m-j} =: a_j$ for all $j \ge 0$ and $\ell \in [M]$. We have 
    \[
    |\varpi_n f_\ell| \le \norm{\varpi_n}_{\text{TV}} \cdot \infnorm{f_\ell - Q_\ell} + |\varpi_n Q_\ell|.
    \]
    It follows that 
    \[
    \sup_{\ell \in [M]}  |\varpi_n f_\ell| \le 2\epsilon + \sup_{\ell \in [M]} |\varpi_n Q_\ell|
    \]
    and we have
    \begin{align*}
        \sup_{\ell \in [M]} |\varpi_n Q_\ell| &\le 
        \sup_{\ell \in [M]} \Bigl| \sum_{j=0}^{m} c_{j\ell}\,  \varpi_n(x^j \ind_{|x| \le B}) \Bigr| \\
        &\le \sum_{j=0}^{m} \bigl(\sup_{\ell \in [M]} |c_{j\ell}| \bigr) \cdot |\varpi_n(x^j \ind_{|x| \le B})|  \le \sum_{j=0}^{m} a_j  \,|\varpi_n(x^j \ind_{|x| \le B})|
    \end{align*}    
    We have 
    \begin{align*}
        | \varpi_n(x^j \ind_{|x| \le B}) | \le  | \varpi_n(x^j) | + | \varpi_n(x^j \ind_{|x| > B}) |.
    \end{align*}     
     Then, for the second term, using~\eqref{eq:wass_diff_f_tail}, we have, for all $j \in [m]$,
    \begin{align*}
        | \varpi_n(x^j \ind_{|x| > B}) | &\le  \mu_n(|x^j|\ind_{|x| > B}) + \bar \mu_n(|x^j|\ind_{|x| > B}) \\
        &\le  \mu_n(|x^m|\ind_{|x| > B}) + \bar \mu_n(|x^m|\ind_{|x| > B}).
    \end{align*}
    Taking maximum over $j \in [m]$, followed by expectation, we have
    \begin{align*}
        \ex \sup_{j \in [m]} | \varpi_n(x^j \ind_{|x| > B})| &\le  2 \bar \mu_n(|x|^m \ind_{|x| > B})  = \frac2{n} \sum_{i=1}^n \ex \bigl( |Y_{i,n}|^m \ind_{\{|Y_{i,n}| > B\}} \bigr).
    \end{align*}
    Take $n$ large enough so that \begin{equation}\label{eq:rn:lower:2}
        r_n \ge 2 m = 8 \lceil C_2 B / \epsilon \rceil,
    \end{equation}
    which we will verify at the end.
    Then, by Lemma~\ref{lem:psi_properties}(a) we have
    $\ex[|Y_{i,n}|^{2m}] \le (C_1 \zeta)^{2m} (2m)^{m} = (2C_1^2 \zeta^2 m)^{m}$. Then, by Cauchy-Schwarz, we have
    \begin{align*}
        \ex \bigl( |Y_{i,n}|^m \ind_{\{|Y_{i,n}| > B\}} \bigr) &\le \sqrt{\ex[|Y_{i,n}|^{2m}] \cdot \pr(|Y_{i,n}| > B)} \\ &\le (2C_1^2 \zeta^2 m)^{m} \cdot \exp(-c B^2/2\zeta^2) 
    \end{align*}
    Using $a_j = 6B \cdot 3^{m-j}$, we have $\sum_{j=0}^m a_j \le 9 B\cdot 3^m$. It follows that
    \begin{align*}
        \ex \Bigl[ \sum_{j=0}^m a_j | \varpi_n(x^j \ind_{|x| > B})| \Bigr] &\le \Bigl(\sum_{j=0}^m a_j \Bigr)\cdot \ex \sup_{j \in [m]} | \varpi_n(x^j \ind_{|x| > B})| \\
         &\le 9 B\cdot 3^m \cdot 2 (2C_1^2 \zeta^2 m)^{m} \cdot \exp(-c B^2/2\zeta^2) \\
        &\le 18 \exp \Bigl( \log B + m \log(6C_1^2 \zeta^2 m) - c B^2/2\zeta^2 \Bigr) \\
        &\le 18 \exp \biggl( \log B + 4 \lceil C_2 B / \epsilon \rceil \log\Bigl(24C_1^2 \zeta^2 \lceil C_2 B / \epsilon \rceil\Bigr) - c B^2/2\zeta^2 \biggr).
    \end{align*}
    Since $B^2$ grows faster than $B \log B$, the RHS can be made $\le \epsilon$ for $B \ge B_2(\zeta, \epsilon)$ for some $B_2(\zeta, \epsilon)$ large enough. For this choice of $B$, we have
    \begin{align*}
        \ex \sup_{\ell \in [M]} |\varpi_n Q_\ell| &\le \sum_{j=0}^m a_j \ex |\varpi_n(x^j)| + \epsilon \\
        &\le \sum_{j=0}^m a_j \sqrt{\var\Bigl(\frac1n\sum_{i=1}^n Y_{i,n}^j\Bigr)} + \epsilon.
    \end{align*}
    By assumption (b) of the theorem,
    \begin{align}\label{eq:var:less:than:eps}
    \max_{0 \le j \le m} \var\Bigl(\frac1n\sum_{i=1}^n Y_{i,n}^j\Bigr) \le \epsilon^2 / (\sum_{j=0}^m a_j)^2    
    \end{align}
     for sufficiently large $n$. This gives $\ex \sup_{\ell \in [M]} |\varpi_n Q_\ell| \le 2\epsilon$. Putting the pieces together, we have 
    \[
        \ex \sup_{f \in \Lset_B} |\varpi_n f| \le 2\epsilon + 2\epsilon + 2\epsilon = 6\epsilon.
    \]
    All in all, taking $B = \max\{3, B_0(\zeta), B_1(\zeta), B_2(\zeta, \epsilon)\}$, and $n$ large enough so that~\eqref{eq:rn:lower:1} and~\eqref{eq:rn:lower:2} are satisfied for the chosen $B$, and~\eqref{eq:var:less:than:eps} holds, we obtain $\ex W_1(\mu_n, \bar \mu_n) \le 8\epsilon$. The proof is complete.
\end{proof}

\section{Proof of Vector Extension (Theorem \ref{thm:vector_result})}

To prove the vector case, we rely on the fact that convergence of one-dimensional projections (which we established in Theorem \ref{thm:main_result}) implies convergence of the full measure, provided the first moments are controlled.

We use the notation $\eta_\theta$ to denote the pushforward of a measure $\eta$ on $\reals^d$ by the projection $x \mapsto \langle \theta, x \rangle$.

\begin{lemma}[Lipschitz Continuity of Projections]
\label{lem:proj_lip}
    Let $\eta$ be a probability measure on $\reals^d$. 
    Then, for any $\theta_1, \theta_2 \in \reals^d$,
    $$ W_1(\eta_{\theta_1}, \eta_{\theta_2}) \le \norm{\theta_1 - \theta_2} M_1(\eta). $$
\end{lemma}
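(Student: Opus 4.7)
The plan is to produce a valid coupling of $\eta_{\theta_1}$ and $\eta_{\theta_2}$ directly from $\eta$ itself: draw $X \sim \eta$ once, and consider the joint law of $(\langle \theta_1, X \rangle, \langle \theta_2, X \rangle)$ on $\reals^2$. By definition of pushforward, its marginals are exactly $\eta_{\theta_1}$ and $\eta_{\theta_2}$, so this is admissible in the Kantorovich primal formulation of $W_1$ on $\reals$.

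Once the coupling is in hand, the remaining steps are essentially mechanical. First, apply the primal definition of $W_1$ to get
\[
W_1(\eta_{\theta_1}, \eta_{\theta_2}) \;\le\; \int \bigl| \langle \theta_1, x \rangle - \langle \theta_2, x \rangle \bigr| \, d\eta(x) \;=\; \int \bigl| \langle \theta_1 - \theta_2, x \rangle \bigr| \, d\eta(x).
\]
Second, apply Cauchy--Schwarz pointwise inside the integral to bound $|\langle \theta_1 - \theta_2, x\rangle|$ by $\norm{\theta_1 - \theta_2}\, \norm{x}$, pull the constant $\norm{\theta_1 - \theta_2}$ out of the integral, and recognize the remaining integral as $M_1(\eta)$.

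There is no real obstacle here: the argument is a one-line coupling bound, and no regularity on $\eta$ beyond having a finite first moment (which is encoded in $M_1(\eta)$; if $M_1(\eta) = \infty$ the stated inequality is vacuous) is needed. The only minor point worth noting in the write-up is that one could alternatively proceed via the dual, since for $f \in \Lip(1)$ on $\reals$ the function $x \mapsto f(\langle \theta, x\rangle)$ is $\norm{\theta}$-Lipschitz on $\reals^d$, giving the same bound; but the coupling proof is cleaner and avoids invoking Kantorovich--Rubinstein duality.
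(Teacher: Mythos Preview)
Your argument is correct. The coupling you write down, namely the pushforward of $\eta$ under $x \mapsto (\langle \theta_1, x\rangle, \langle \theta_2, x\rangle)$, has the right marginals, and the subsequent Cauchy--Schwarz step is unproblematic.

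The paper, however, takes exactly the alternative route you sketch and then set aside: it works entirely through the dual formulation. With $X \sim \eta$ it writes
\[
W_1(\eta_{\theta_1}, \eta_{\theta_2}) = \sup_{f \in \Lip(1)} \bigl| \ex f(\langle \theta_1, X\rangle) - \ex f(\langle \theta_2, X\rangle) \bigr|,
\]
moves the absolute value inside the expectation, uses the $1$-Lipschitz property of $f$ to get $\ex|\langle \theta_1 - \theta_2, X\rangle|$, and finishes with Cauchy--Schwarz. So the two proofs diverge at the very first step (primal coupling versus dual test functions) and then converge at the same integral $\int |\langle \theta_1 - \theta_2, x\rangle|\, d\eta(x)$. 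Your coupling proof has the mild advantage of not invoking Kantorovich--Rubinstein duality at all; the paper's version is consistent with its use of the dual form elsewhere. Neither approach is materially harder or more general than the other here.
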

\begin{proof}
    Let $X \sim \eta$. Using the dual formulation of $W_1$ for measures on $\reals$:
    \begin{align*}
        W_1(\eta_{\theta_1}, \eta_{\theta_2}) &= \sup_{f \in \Lip(1)} \bigl|\ex f(\langle \theta_1, X \rangle) - \ex f(\langle \theta_2, X \rangle)\bigr| \\
        &\le \sup_{f \in \Lip(1)}  \ex \bigl|f(\langle \theta_1, X \rangle) - f(\langle \theta_2, X \rangle)\bigr| \\
        &\le \ex |\langle \theta_1 - \theta_2, X \rangle| 
        \le \norm{\theta_1 - \theta_2} \cdot \ex \|X\|.
    \end{align*}
    This completes the proof.
\end{proof}

The following proposition allows us to upgrade scalar convergence to vector convergence.

\begin{proposition}
\label{prop:lifting}
    Let $\{\mu_n\}$ and $\{\eta_n\}$ be random probability measures on $\reals^d$ with expectations $\bar{\mu}_n$ and $\bar{\eta}_n$. Assume:
    \begin{enumerate}
        \item $\sup_{n\ge 1} (M_1(\bar \mu_n) + M_1(\bar \eta_n)) < \infty$.
        \item For every $\theta \in S^{d-1}$, $\ex \bigl[ W_1(\mu_{n,\theta}, \eta_{n,\theta}) \bigr] \to 0$ as $n \to \infty$.
    \end{enumerate}
    Then, $\ex \bigl[ W_1(\mu_n, \eta_n) \bigr] \to 0$.
\end{proposition}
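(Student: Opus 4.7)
The plan is a Cram\'er--Wold-style lift in the Wasserstein metric: use convergence of all one-dimensional projections to deduce convergence of the full measure. The argument has two main stages. First, upgrade the pointwise-in-$\theta$ convergence of hypothesis~(2) to uniform convergence over $S^{d-1}$. Second, translate uniform projection convergence into full $d$-dimensional $W_1$ convergence by truncating to a compact ball and appealing to a Bonnotte-style inequality between $W_1$ and the sliced Wasserstein distance.

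For the first stage, Lemma~\ref{lem:proj_lip} applied twice through the triangle inequality yields, for any $\theta_1, \theta_2 \in S^{d-1}$,
\[
\bigl|W_1(\mu_{n,\theta_1}, \eta_{n,\theta_1}) - W_1(\mu_{n,\theta_2}, \eta_{n,\theta_2})\bigr| \le \|\theta_1 - \theta_2\|\bigl(M_1(\mu_n) + M_1(\eta_n)\bigr).
\]
Taking expectations, $\theta \mapsto \ex W_1(\mu_{n,\theta}, \eta_{n,\theta})$ is Lipschitz on $S^{d-1}$ with constant $M_1(\bar\mu_n) + M_1(\bar\eta_n)$, uniformly bounded in $n$ by hypothesis~(1). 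Since $S^{d-1}$ is compact, equicontinuity plus pointwise convergence to zero forces uniform convergence: $\sup_{\theta \in S^{d-1}} \ex W_1(\mu_{n,\theta}, \eta_{n,\theta}) \to 0$.

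For the second stage, fix $R > 0$ and let $T_R(x) = x \min\{1, R/\|x\|\}$ be the radial shrinkage onto $\overline{B}(0,R)$, with $\mu_n^R = (T_R)_*\mu_n$ and $\eta_n^R = (T_R)_*\eta_n$. Since $\|T_R(x) - x\| = (\|x\|-R)_+$, the triangle inequality gives
\[
\ex W_1(\mu_n, \eta_n) \le \ex W_1(\mu_n^R, \eta_n^R) + \bar\mu_n\bigl((\|x\|-R)_+\bigr) + \bar\eta_n\bigl((\|x\|-R)_+\bigr).
\]
For the body term, a Bonnotte-style inequality of the form $W_1 \le C(d,R) \cdot SW_1^{1/(d+1)}$ holds on $\overline{B}(0,R)$, where $SW_1$ is the sliced Wasserstein distance. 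Because $T_R$ is $1$-Lipschitz, a projected analogue of the truncation bound above yields $W_1(\mu_{n,\theta}^R, \eta_{n,\theta}^R) \le W_1(\mu_{n,\theta}, \eta_{n,\theta}) + \text{(tail terms)}$, so that Fubini combined with the uniform bound from the first stage forces $\ex SW_1(\mu_n^R, \eta_n^R) \to 0$; Jensen then upgrades this to $\ex W_1(\mu_n^R, \eta_n^R) \to 0$ for every fixed $R$.

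The main technical obstacle is controlling the first-moment tails $\bar\mu_n\bigl((\|x\|-R)_+\bigr)$ uniformly in $n$: hypothesis~(1) provides only a uniform $M_1$ bound, which is strictly weaker than uniform integrability. One resolution is to let $R = R(n) \to \infty$ at a rate balanced against the projection-convergence speed from the first stage; another is to exploit the tail structure of the target application, since in Theorem~\ref{thm:vector_result} the $\Psi_{r_n}$ hypothesis on every projection yields uniform integrability via Lemma~\ref{lem:psi_properties}, more than enough to close the argument.
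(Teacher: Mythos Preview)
Your first stage---Lipschitz continuity of $\theta\mapsto W_1(\mu_{n,\theta},\eta_{n,\theta})$ via Lemma~\ref{lem:proj_lip}, then equicontinuity plus pointwise convergence on the compact sphere---is essentially the same mechanism the paper uses (it phrases it as an explicit $\epsilon$-net, but the content is identical).

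The gap is in the second stage. By choosing the Bonnotte inequality $W_1\le C(d,R)\,SW_1^{1/(d+1)}$, you commit to compact support and hence to a truncation step, and then you must control $\bar\mu_n\bigl((\|x\|-R)_+\bigr)$ and $\bar\eta_n\bigl((\|x\|-R)_+\bigr)$. You correctly identify that hypothesis~(1) gives only a uniform $M_1$ bound, not uniform integrability, but neither of your proposed fixes closes the gap. Resolution~(a) fails outright: take $\bar\mu_n=(1-1/n)\delta_0+(1/n)\delta_{n e_1}$, which has $M_1(\bar\mu_n)=1$ for all $n$, yet $\bar\mu_n\bigl((\|x\|-R)_+\bigr)=(n-R)_+/n\to 1$ for any $R(n)=o(n)$; since hypothesis~(2) carries no rate, you cannot force $R(n)$ to grow fast enough and simultaneously keep $C(d,R(n))\cdot(\text{projection error})^{1/(d+1)}$ small. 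Resolution~(b) imports hypotheses from Theorem~\ref{thm:vector_result} that are not part of the proposition, so it does not prove the proposition as stated.

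The paper sidesteps the whole issue by citing the Bayraktar--Guo inequality $W_1(\mu,\eta)\le C(d)\sup_{\theta\in S^{d-1}}W_1(\mu_\theta,\eta_\theta)$, which holds for arbitrary probability measures with finite first moment and requires no support restriction. With that in hand, an $\epsilon$-net on $S^{d-1}$ plus the Lipschitz bound from your first stage gives $\ex W_1(\mu_n,\eta_n)\le C(d)\bigl(\epsilon\,\ex[L_n]+\sum_j \ex W_1(\mu_{n,\theta_j},\eta_{n,\theta_j})\bigr)$, and hypothesis~(1) is exactly what is needed to bound $\ex[L_n]$. No truncation, no uniform integrability.
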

\begin{proof}
    Let $L_n = M_1(\mu_n) + M_1(\eta_n)$. By Lemma \ref{lem:proj_lip} and the triangle inequality, the map $\theta \mapsto W_1(\mu_{n,\theta}, \eta_{n,\theta})$ is $L_n$-Lipschitz on the sphere. Specifically,
    \begin{align*}
       W_1(\mu_{n,\theta_1}, \eta_{n,\theta_1}) -  W_1(\mu_{n,\theta_2}, \eta_{n,\theta_2}) 
       &\le W_1(\mu_{n,\theta_1}, \mu_{n,\theta_2})  + W_1(\eta_{n,\theta_1}, \eta_{n,\theta_2}) \\
       &\le L_n \norm{\theta_1 - \theta_2}.
    \end{align*}
    
    Bayraktar and Guo \cite{bayraktar2021} proved that the $W_1$ distance in $\reals^d$ is controlled by the supremum over projections: there exists a constant $C(d)$ such that 
    \[W_1(\mu_n, \eta_n) \le C(d) \sup_{\theta \in S^{d-1}} W_1(\mu_{n,\theta}, \eta_{n,\theta}).\]
    
    Let $\{\theta_1, \dots, \theta_N\}$ be an $\epsilon$-net of $S^{d-1}$. For any $\theta$, let $\theta_j$ be its nearest neighbor. Then:
    $$ W_1(\mu_{n,\theta}, \eta_{n,\theta}) \le W_1(\mu_{n,\theta_j}, \eta_{n,\theta_j}) + L_n \epsilon. $$
    Taking the expectation of the supremum,
    \begin{align*}
        \ex [W_1(\mu_n, \eta_n)] &\le C(d) \left( \epsilon \, \ex[L_n] + \sum_{j=1}^N \ex W_1(\mu_{n,\theta_j}, \eta_{n,\theta_j}) \right).
    \end{align*}
    Note that $\ex[L_n] = M_1(\bar{\mu}_n) + M_1(\bar{\eta}_n)$, which is uniformly bounded by assumption. The sum term vanishes as $n \to \infty$ by hypothesis (Condition~2). Taking $n \to \infty$ then $\epsilon \to 0$ yields the result.
\end{proof}

\begin{proof}[Proof of Theorem \ref{thm:vector_result}]
    We apply Proposition \ref{prop:lifting} with $\eta_n = \bar{\mu}_n$.
    Condition 1 holds by assumption (c).
    For Condition 2, fix $\theta \in S^{d-1}$. The projected variables $Z_{i,n} = \langle \theta, Y_{i,n} \rangle$ satisfy the scalar assumptions of Theorem \ref{thm:main_result} (uniform $\Psi_{r_n}$ tails and vanishing moment variance). Note that $\bar{\mu}_{n,\theta} = \ex[\mu_{n,\theta}]$. Thus, Theorem \ref{thm:main_result} implies $\ex W_1(\mu_{n,\theta}, \bar{\mu}_{n,\theta}) \to 0$.
    The conclusion follows immediately.
\end{proof}

\end{document}